\begin{document}
\newtheorem{thm}{Theorem}
\newtheorem{lem}[thm]{Lemma}
\title*{Polynomial estimates over exponential curves in~$\mathbb  C^2$}
\author{Shirali Kadyrov and Yershat Sapazhanov}
\institute{Shirali Kadyrov \at Suleyman Demirel University, Kaskelen, Kazakhstan 040900, \\ \email{shirali.kadyrov@sdu.edu.kz}
\and Yershat Sapazhanov \at Suleyman Demirel University, Kaskelen, Kazakhstan 040900,\\\email{yershat.sapazhanov@sdu.edu.kz}}
%
%
\maketitle

\abstract*{Each chapter should be preceded by an abstract (10--15 lines long) that summarizes the content. The abstract will appear \textit{online} at \url{www.SpringerLink.com} and be available with unrestricted access. This allows unregistered users to read the abstract as a teaser for the complete chapter. As a general rule the abstracts will not appear in the printed version of your book unless it is the style of your particular book or that of the series to which your book belongs.
Please use the 'starred' version of the new Springer \texttt{abstract} command for typesetting the text of the online abstracts (cf. source file of this chapter template \texttt{abstract}) and include them with the source files of your manuscript. Use the plain \texttt{abstract} command if the abstract is also to appear in the printed version of the book.}

\abstract{For any complex $\alpha$ with non-zero imaginary part we show that Bernstein-Walsh type inequality holds on the piece of the curve $\{(e^z,e^{\alpha z}) : z \in \mathbb C\}$. Our result extends a theorem of Coman-Poletsky \cite{CP10} where they considered real-valued $\alpha$.}

\section{Introduction}
\label{sec:1}
In pluripotential theory, one is often interested in growth of polynomials of several variables. A classical Bernstein-Walsh inequality \cite{Ra95} gives important implications in this direction. Recently, there has been significant research work carried in obtaining Bernstein-Walsh type inequalities, see e.g \cite{AO13, Br18, BBL10, CP10, CPo03, CP03, KL16, Ne06}. We now recall the result of Coman-Poletsky in \cite{CP10}.

Let $\alpha \in (0,1) \backslash \mathbb Q$ and $K \subset \mathbb C^2$ be a compact set given by $K=\{(e^z,e^{\alpha z}) : |z| \le 1\}.$ Define
\begin{equation}\label{eq:En}
E_n(\alpha)=\sup\{\|P\|_{\Delta^2} : P \in \mathcal P_n, \|P\|_K \le 1\},
\end{equation}
where $\mathcal P_n$ is the space of polynomials in $\mathbb C[z,w]$ of degree at most $n$, $\Delta^2$ is the closed bidisk $\{(z,w) \in \mathbb C^2 : |z|,|w| \le 1\},$ and $\|\cdot\|_{\Delta^2}, \|\cdot\|_K$ are the uniform norms defined on compact sets $\Delta^2$ and $ K$, respectively. Let $e_n(\alpha):=\log E_n(\alpha)$. Then, Coman-Poletsky prove

\begin{theorem}\label{thm:cp}
For any  Diophantine $\alpha \in (0,1)$ one has
$$\frac{n^2 \log n}{2}-n^2 \le e_n(\alpha) \le \frac{n^2\log n}{2} + 9 n^2 +Cn,$$
for any $n \ge 1$, where constant $C>0$ depends on $\alpha$.
\end{theorem}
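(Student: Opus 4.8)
The plan is to pass from polynomials to exponential sums and to read off both bounds from the geometry of the frequency set $\Lambda_n := \{\lambda_{jk} := j + k\alpha : j,k\ge 0,\ j+k\le n\}$. Given $P=\sum_{j+k\le n}c_{jk}z^jw^k\in\mathcal P_n$, its restriction to $K$ is the entire function $f(z)=P(e^z,e^{\alpha z})=\sum_{j+k\le n}c_{jk}e^{\lambda_{jk}z}$, so that $\|P\|_K=\max_{|z|\le 1}|f(z)|$. Since $\alpha$ is irrational the $N:=\binom{n+2}{2}$ numbers in $\Lambda_n$ are distinct and all lie in $[0,n]$. Two elementary facts link the norms to the coefficients: Cauchy's estimate on the torus gives $\max_{jk}|c_{jk}|\le\|P\|_{\Delta^2}$, while the triangle inequality gives $\|P\|_{\Delta^2}\le\sum_{jk}|c_{jk}|\le N\max_{jk}|c_{jk}|$. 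The bridge between $f$ and the coefficients is the moment system $t_p:=f^{(p)}(0)=\sum_{jk}c_{jk}\lambda_{jk}^{\,p}$, a (transposed) Vandermonde system in the nodes $\Lambda_n$; Cauchy's inequality yields $|t_p|\le p!$ whenever $\|f\|_{|z|\le1}\le 1$, and conversely $|t_p|\le(\sum_{jk}|c_{jk}|)\,n^p$.

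For the lower bound I would produce an extremal $P$ by a dimension count. Since $\dim\mathcal P_n=N$, the $N-1$ linear conditions $t_0=\cdots=t_{N-2}=0$ have a nonzero solution $P$, for which $f$ vanishes to order $\ge N-1$ at the origin. Normalizing $\max_{jk}|c_{jk}|=1$ forces $\|P\|_{\Delta^2}\ge 1$, while the expansion $f(z)=\sum_{p\ge N-1}(t_p/p!)z^p$ together with $|t_p|\le Nn^p$ and the tail estimate $\sum_{p\ge N-1}n^p/p!\le 2n^{N-1}/(N-1)!$ (valid because $N\gg n$) gives $\|P\|_K\le 2Nn^{N-1}/(N-1)!$. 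Hence $E_n\ge\|P\|_{\Delta^2}/\|P\|_K\ge (N-1)!/(2Nn^{N-1})$, and Stirling together with $N=\tfrac{n^2}{2}+O(n)$ turns this into $e_n\ge\tfrac{n^2\log n}{2}-n^2$. Note this direction uses only $\Lambda_n\subset[0,n]$ and not the Diophantine hypothesis.

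For the upper bound I would invert the Vandermonde system by Lagrange interpolation. Writing $D_m=\prod_{m'\ne m}(\lambda_m-\lambda_{m'})$ and $g_m(x)=\prod_{m'\ne m}(x-\lambda_{m'})$, one has $c_m=D_m^{-1}\,[\,g_m(d/dz)f\,](0)$; estimating this via a Cauchy integral on $|z|=1$, using $\|f\|_{|z|\le1}\le 1$ and the crude bound $|\lambda_{m'}|\le n$ on the elementary symmetric functions forming the coefficients of $g_m$, produces $|c_m|\le (N-1)!\,e^{n}/|D_m|$. Combined with $\|P\|_{\Delta^2}\le N\max_m|c_m|$, this reduces the entire estimate to a sufficiently sharp lower bound for $|D_m|=\prod_{m'\ne m}|\lambda_m-\lambda_{m'}|$, equivalently for the discriminant $\prod_m|D_m|=\prod_{m<m'}|\lambda_m-\lambda_{m'}|^2$ of the frequency set.

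The decisive point, and the step I expect to be the main obstacle, is to show $\log|D_m|=N\log n-O(n^2)$ with the $\alpha$-dependence confined to an $O(n)$ term. The leading $N\log n=\tfrac{n^2\log n}{2}$ and the universal $O(n^2)=9n^2$ correction should come from the near-uniform distribution of $\Lambda_n$ in $[0,n]$ at density $\approx n/2$, so that $\log|D_m|\approx\tfrac{N}{n}\int_0^n\log|\lambda_m-x|\,dx$ up to explicit discreteness corrections. The genuinely small factors are the near-resonances $|\lambda_m-\lambda_{m'}|=|(j-j')+(k-k')\alpha|$ with $k\ne k'$; here the Diophantine inequality $|(j-j')+(k-k')\alpha|\ge c\,|k-k'|^{1-\tau}\ge c\,n^{1-\tau}$ keeps each factor from being too small, and the aggregate size of these gaps is governed by $\sum_{q=1}^{n}\log\|q\alpha\|=O(n)$, finite on average by equidistribution and free of large spikes by the Diophantine bound. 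This last quantity is precisely the source of the $Cn$ term with its $\alpha$-dependent constant. Feeding $\log|D_m|=N\log n-O(n^2)$ and $\log(N-1)!=2N\log n-O(n^2)$ into $\log|c_m|\le\log(N-1)!+n-\log|D_m|$ produces the cancellation responsible for the factor $\tfrac12$, and hence $e_n\le\tfrac{n^2\log n}{2}+9n^2+Cn$.
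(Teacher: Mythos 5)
First, a framing remark: the paper does not actually prove this statement --- Theorem~\ref{thm:cp} is quoted from \cite{CP10}, and the paper only proves the complex-$\alpha$ analogue (Theorem~\ref{thm:main}) by the same blueprint. Your outline is exactly that blueprint: restrict to $f(z)=P(e^z,e^{\alpha z})$, extract each coefficient by applying the differential operator $\prod_{(j,k)\ne(l,m)}\left(\tfrac{d}{dz}-j-k\alpha\right)$ (your $g_m(d/dz)$), bound the result by Cauchy estimates, and reduce the whole upper bound to a lower bound on $|D_m|=\prod_{m'\ne m}|\lambda_m-\lambda_{m'}|$; the lower bound for $e_n$ via a nonzero $P\in\mathcal P_n$ whose restriction vanishes to order $\dim\mathcal P_n-1$ at the origin is also the paper's argument (your variant, which bounds $\|P\|_K$ directly by the Taylor tail and uses $\|P\|_{\Delta^2}\ge\max|c_{jk}|$ instead of invoking \eqref{eq:B} with $r=N/n$, is fine and slightly more self-contained). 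So the architecture is right, and you have correctly located where the Diophantine hypothesis must enter.

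The genuine gap is the step you yourself flag as the main obstacle. Your justification of $\log|D_m|\ge N\log n-O(n^2)-Cn$ rests on the claim that $\sum_{q=1}^{n}\log\|q\alpha\|^{-1}=O(n)$ (here $\|x\|$ denotes the distance from $x$ to the nearest integer), ``finite on average by equidistribution and free of large spikes by the Diophantine bound.'' Neither of those two facts, as stated, delivers the claim. Equidistribution of $q\alpha$ mod $1$ gives no control over this sum because $x\mapsto\log\|x\|^{-1}$ is unbounded, and the termwise Diophantine inequality $\|q\alpha\|\ge c\,q^{-\tau}$ gives only $\sum_{q\le n}\log\|q\alpha\|^{-1}=O(n\log n)$, which would contaminate the $Cn$ term (and hence the $9n^2$ constant) in the statement. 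The estimate is true, but it requires a separation argument: if $\|q_1\alpha\|$ and $\|q_2\alpha\|$ are both $<\delta$ then $\|(q_1-q_2)\alpha\|<2\delta$, so the Diophantine condition forces the near-resonant $q\le n$ to be widely spaced; counting them by levels (equivalently, the three-distance/continued-fraction analysis of \cite{CP10}, Lemmas~2.2--2.4) shows the $j$-th smallest of the $\|q\alpha\|$ is $\gtrsim j/n$ up to an $\alpha$-dependent constant, whence the sum is $O(n)$. This is precisely the content you would have to supply, and it is the only place the argument is not routine; in the paper's complex case the entire issue evaporates because $|j'+q\alpha|\ge|q\alpha_2|$ (Lemma~\ref{prod}). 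One further bookkeeping point: you need the lower bound on $|D_m|$ for \emph{every} $m$, not merely for the discriminant $\prod_m|D_m|$, since a single small $|D_m|$ ruins the bound on $\max_m|c_m|$; the level-counting argument does give this uniformly in $m$ (for fixed $(l,m)$ each difference $m-k$ occurs at most once), but your discriminant formulation obscures it and should be replaced by the per-$m$ statement.
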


Here, term `Diophantine' comes from the Diophantine approximation theory and is an exponent that tells how well a real number can be approximated by rationals. For a proper definition see \cite{CP10}. As a consequence of Theorem~\ref{thm:cp} one gets the Bernstein-Walsh type inequality 
\begin{equation} \label{eq:B}
|P(z,w)| \le \|P\|_K E_n(\alpha) e^{n \log^+\max\{|z|,|w|\}},
\end{equation}
for any $(z,w) \in \mathbb C^2$, $P \in \mathcal P_n$ and $E_n(\alpha)=e^{e_n(\alpha)}$ is determined by the theorem above.

We note that the inequality (\ref{eq:B}) holds for any $\alpha \in \mathbb C$ and finding the optimal bounds for $e_n(\alpha)$ is what makes it challenging in general. The proof of Theorem~\ref{thm:cp} makes use of the well-developed continued fraction expansion theory. We note that the theorem considers real-valued $\alpha$'s only. In this note, we aim to extend Theorem~\ref{thm:cp} to complex $\alpha$'s. 

We now state our main result. 

\begin{theorem} \label{thm:main}
Let $\alpha=\alpha_1 + i \alpha_2$, $\alpha_1,\alpha_2 \in \mathbb R$ be given such that $|\alpha| <1 $ and $\alpha_2 \ne 0$. Then,
$$\frac{n^2 \log n}{2}-n^2  \le e_n(\alpha) \le \frac{n^2 \log n}2+8 n^2-n \log |\alpha_2|.$$
\end{theorem}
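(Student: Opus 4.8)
The plan is to exploit the fact that restricting $P\in\mathcal P_n$ to $K$ turns it into an exponential sum. Writing $P(z,w)=\sum_{j+k\le n}c_{jk}z^jw^k$ and $f(t):=P(e^t,e^{\alpha t})=\sum_{j+k\le n}c_{jk}e^{\mu_{jk}t}$ with frequencies $\mu_{jk}:=j+\alpha k$, the hypothesis $\|P\|_K\le 1$ is exactly $\sup_{|t|\le 1}|f(t)|\le 1$, while on the bidisk one has both $|c_{jk}|\le\|P\|_{\Delta^2}$ and $\|P\|_{\Delta^2}\le\sum_{j+k\le n}|c_{jk}|$. Since $\alpha_2\ne 0$, the map $(j,k)\mapsto\mu_{jk}$ is injective (comparing imaginary parts forces $k=k'$, then $j=j'$), so the $N:=\frac{(n+1)(n+2)}2$ frequencies are pairwise distinct, and $|\mu_{jk}|\le j+|\alpha|k<n$ because $|\alpha|<1$. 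Both inequalities therefore reduce to comparing the sup-norm of $f$ on the unit disk with the size of its coefficients, and the entire problem is governed by the generalized Vandermonde determinant attached to the nodes $\{\mu_{jk}\}$.

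For the upper bound I would isolate a single coefficient using the differential operator $Q(D)=\prod_{(j,k)\ne(j_0,k_0)}(D-\mu_{jk})$, $D=d/dt$, which annihilates every exponential $e^{\mu_{jk}t}$ except the target and yields $Q(D)f(0)=c_{j_0k_0}\prod_{(j,k)\ne(j_0,k_0)}(\mu_{j_0k_0}-\mu_{jk})$. Writing $Q(D)f(0)=\sum_p q_p f^{(p)}(0)$ and combining the Cauchy estimates $|f^{(p)}(0)|\le p!\,\sup_{|t|\le 1}|f|$ with $|\mu_{jk}|<n$ bounds the numerator by $(N-1)!\,e^{n}$. Hence $|c_{j_0k_0}|\le (N-1)!\,e^{n}\,/\prod_{(j,k)\ne(j_0,k_0)}|\mu_{j_0k_0}-\mu_{jk}|$, and summing over the $N$ coefficients gives the upper bound once the Vandermonde product in the denominator is bounded below.

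For the lower bound I would run the same machinery in reverse. Imposing $f(0)=f'(0)=\cdots=f^{(N-2)}(0)=0$ is only $N-1$ conditions on the $N$ coefficients, so there is a nonzero $P$ for which $f$ vanishes to order $N-1$ at the origin, and Cramer's rule then gives $c_{jk}=f^{(N-1)}(0)\,/\prod_{(j',k')\ne(j,k)}(\mu_{jk}-\mu_{j'k'})$. The Hermite--Genocchi representation $f(t)=f^{(N-1)}(0)\,t^{N-1}\int_\Sigma e^{t\langle\sigma,\mu\rangle}\,d\sigma$ over the standard simplex $\Sigma$ of volume $1/(N-1)!$ shows, since each $\langle\sigma,\mu\rangle$ lies in the convex hull of the nodes and hence in $\{|\zeta|<n\}$, that $\|P\|_K=\sup_{|t|\le 1}|f|\le |f^{(N-1)}(0)|\,e^{n}/(N-1)!$. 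Combining this with $\|P\|_{\Delta^2}\ge\max_{jk}|c_{jk}|$ yields $E_n(\alpha)\ge (N-1)!\,e^{-n}\,/\prod_{(j',k')\ne(j,k)}|\mu_{jk}-\mu_{j'k'}|$ for a convenient index, so everything again comes down to the same Vandermonde product, now to be estimated from above.

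Thus the heart of the matter, and the step I expect to be hardest, is estimating $\prod_{(j',k')\ne(j,k)}|\mu_{jk}-\mu_{j'k'}|=\prod|(j-j')+\alpha(k-k')|$ uniformly in $(j,k)$. I would group the factors by the level $k'$: the diagonal level $k'=k$ contributes the integer product $j!\,(n-k-j)!$, while each off-diagonal level contributes $\prod_{j'}|(j-j')+\alpha(k-k')|$, in which exactly one near-resonant factor is as small as $|\alpha_2|\,|k-k'|$ and the remaining factors are comparable to integers. Summing the logarithms and comparing the resulting two-dimensional lattice sum with $\int\int\log|x+\alpha y|\,dx\,dy$ over the relevant triangle produces the main term $\frac{n^2\log n}{2}$; the $O(n^2)$ constants follow from Stirling's formula applied to $(N-1)!$ and to the factorials above, and the accumulation of one factor $|\alpha_2|$ per off-diagonal level is exactly what yields the $-n\log|\alpha_2|$ correction. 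Controlling these near-resonant factors --- showing there is essentially one per level and that no further clustering occurs because $\alpha_2\ne 0$ separates the nodes by their imaginary parts --- is the crucial point at which the complex hypothesis replaces the Diophantine input used by Coman--Poletsky.
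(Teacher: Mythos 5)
Your proposal is correct in substance, and its upper-bound half is essentially the paper's argument: the same annihilating operator $Q(D)=\prod_{(j,k)\ne(j_0,k_0)}(D-\mu_{jk})$ isolating $c_{j_0k_0}$ times the Vandermonde-type product, the same Cauchy-plus-Vieta estimate for the numerator (your bound $(N-1)!\,e^{n}$, which follows from $\sum_p \frac{(N-1)!}{p!(N-1-p)!}\,n^{N-1-p}\,p!=(N-1)!\sum_{q}n^{q}/q!$, is in fact marginally sharper than the paper's $(N+n)^N$), and the same level-by-level lower bound on $\prod|\mu_{jk}-\mu_{j'k'}|$ in which each level $k'\ne k$ contributes exactly one near-resonant factor of size at least $|\alpha_2|\,|k-k'|$ while the rest behave like integers --- that is precisely the content of Lemma~\ref{prod}, and it is where $-n\log|\alpha_2|$ enters in both treatments. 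Where you genuinely diverge is the lower bound for $e_n(\alpha)$. The paper also forces $f(t)=P(e^t,e^{\alpha t})$ to vanish to maximal order at $0$ (using the same Vandermonde nonsingularity coming from $\alpha_2\ne 0$), but then finishes softly: writing $f=t^Ng$, the maximum modulus principle gives $\sup_{|t|=r}|f|\ge r^N\|P\|_K$, which is played off against the a priori inequality (\ref{eq:B}) and optimized at $r=N/n$; no upper bound on the Vandermonde product is ever needed on that side. You instead solve for the coefficients by Cramer's rule and control $\|P\|_K$ through the Hermite--Genocchi representation of the divided difference, which does require an upper bound on $\prod|\mu_{jk}-\mu_{j'k'}|$; the crude bound $(2n)^{N-1}$ already yields the leading term $\frac{n^2\log n}{2}$, though you would need to track the Stirling constants to land exactly inside the stated $-n^2$. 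Your route is more self-contained in that it never invokes (\ref{eq:B}), at the price of the divided-difference machinery and one additional product estimate; the paper's route is shorter. Both are valid.
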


We remark here that our proof of Theorem~\ref{thm:main} closely follows that of \cite{CP10}. However, in our case we do not need to appeal to continued fraction theory and as a result our proof requires less effort. Nonetheless, it holds true for \emph{all} non-real complex numbers $\alpha$.

\section{Proof of main result}
\label{sec:2}
In this section we prove our main result Theorem~\ref{thm:main}. For any real $x$ let $\langle x \rangle$ denote the closest integer to $x$. We need the following lemma.

\begin{lem}[cf. Lemma~2.4 in \cite{CP10}]\label{prod}
Let $k,x,y \in \mathbb{Z},$ $x \le y,$ $k \ge 1.$ Then (with $0^0:=1$)
\[ \prod_{j=x}^{y}|j-k \alpha| \ge  \left\{
\begin{array}{ll}
(\frac{y-x}{2e})^{y-x} & \textrm{ , if } \langle k \alpha_1 \rangle \notin [x,y]\textrm{,}\\
(\frac{y-x}{2e})^{y-x} \cdot | k \alpha_2 | & \textrm{ , if } x \le \langle k \alpha_1 \rangle \le y
\end{array} \right. \]
\end{lem}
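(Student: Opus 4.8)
The plan is to push everything through the real part and then exploit the imaginary part through a single extra inequality. For each integer $j$ write $|j-k\alpha| = \sqrt{(j-k\alpha_1)^2+(k\alpha_2)^2}$, which gives the two estimates $|j-k\alpha|\ge |j-k\alpha_1|$ and $|j-k\alpha|\ge |k\alpha_2|$. Set $m:=\langle k\alpha_1\rangle$ and $N:=y-x$, so the product has $N+1$ factors. Since $|m-k\alpha_1|\le \frac{1}{2}$, every integer $j\ne m$ satisfies $|j-k\alpha_1|\ge |j-m|-\frac{1}{2}\ge \frac{1}{2}|j-m|$, because $|j-m|\ge 1$; these are the ``good'' factors.

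In the first case $m\notin[x,y]$, so \emph{every} factor is good and $\prod_{j=x}^{y}|j-k\alpha|\ge \prod_{j=x}^{y}|j-k\alpha_1|\ge 2^{-(N+1)}\prod_{j=x}^{y}|j-m|$. As $m$ lies outside $[x,y]$, the integers $|j-m|$ for $j=x,\dots,y$ are $N+1$ distinct positive integers, so their product is at least $(N+1)!$. Then $N!\ge (N/e)^N$ and $(N+1)/2\ge 1$ give $\prod_{j=x}^{y}|j-k\alpha|\ge (N+1)!/2^{N+1}\ge (N/(2e))^N$, the first alternative. In the second case $x\le m\le y$, I isolate the factor at $j=m$, which may be tiny, and control it by the imaginary part, $|m-k\alpha|\ge |k\alpha_2|$. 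For the other $N$ factors I keep the \emph{sharp} estimate $|j-k\alpha_1|\ge |j-m|-\frac{1}{2}$: writing $a:=m-x$, $b:=y-m$ with $a+b=N$, the left arm contributes $\prod_{s=1}^{a}(s-\frac{1}{2})=(2a-1)!!/2^{a}$ and the right arm $(2b-1)!!/2^{b}$, whence $\prod_{j=x}^{y}|j-k\alpha|\ge |k\alpha_2|\,(2a-1)!!(2b-1)!!/2^{N}$. It remains to prove $(2a-1)!!(2b-1)!!\ge (N/e)^N$, which yields the second alternative $\ge |k\alpha_2|(N/(2e))^N$.

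The main obstacle is exactly this double-factorial inequality. The crude per-factor bound $|j-k\alpha_1|\ge \frac{1}{2}|j-m|$ gives only $a!\,b!/2^{N}$, which falls short of $(N/(2e))^N$ by a factor growing like $2^{N}$, so retaining the full $(s-\frac{1}{2})$ is essential. I would first reduce to the balanced split: with $f(a):=(2a-1)!!(2b-1)!!$ the ratio $f(a+1)/f(a)=(2a+1)/(2b-1)$ (after decreasing $b$ by one) shows $f$ is minimized when $a$ and $b$ are as equal as possible. At the balanced point (even $N$) the identity $((N-1)!!)^2=\frac{N!}{2^{N}}\cdot\frac{N!}{((N/2)!)^{2}}$ together with the central estimate $\frac{N!}{((N/2)!)^{2}}\ge 2^{N}/\sqrt{2N}$ and the Stirling lower bound $N!\ge \sqrt{2\pi N}\,(N/e)^{N}$ gives $((N-1)!!)^2\ge \sqrt{\pi}\,(N/e)^{N}>(N/e)^{N}$; the odd case and the unbalanced splits follow from the same minimization.

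Finally I would verify the small values $N=0,1$ directly against the convention $0^{0}:=1$, since there the lossless factorial/double-factorial estimates degenerate and the claim must be checked by hand rather than through Stirling.
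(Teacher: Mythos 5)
Your proposal is correct and shares the paper's skeleton exactly: bound $|j-k\alpha|$ below by $|j-k\alpha_1|\ge |j-\langle k\alpha_1\rangle|-\frac{1}{2}$ for $j\ne\langle k\alpha_1\rangle$, use the imaginary part $|k\alpha_2|$ only for the single factor at $j=\langle k\alpha_1\rangle$, and reduce to estimating the two arms $\prod_{s=1}^{a}(s-\frac{1}{2})$ and $\prod_{s=1}^{b}(s-\frac{1}{2})$ with $a+b=N$. The only genuine divergence is the final combinatorial step: the paper bounds each arm separately via the Stirling identity $\prod_{s=1}^{m}(s-\frac{1}{2})=\frac{(2m)!}{4^m\,m!}\ge (m/e)^m$ and then combines the arms with $a^a b^b\ge ((a+b)/2)^{a+b}$ (convexity of $t\log t$), whereas you keep the product as $(2a-1)!!\,(2b-1)!!$, show by the ratio test that it is minimized at the balanced split, and invoke Stirling plus the central-binomial estimate there. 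Both routes are valid and prove the same inequality; the paper's per-arm version is shorter and avoids your parity and unbalanced-split bookkeeping, so I would recommend switching to it, but nothing in your plan fails. One remark on the edge case you rightly flag: for $y=x$ with $\langle k\alpha_1\rangle\notin[x,y]$ the first alternative is in fact false as stated (take $k\alpha=0.6+0.01i$, $x=y=0$: the single factor has modulus about $0.6<1=0^0$), and the paper's own proof has the identical gap since it only yields $\frac{1}{2}(y-x)!=\frac{1}{2}$ there; this does not affect the application, where only intervals with $y-x\ge 1$ occur.
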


\begin{proof}

We argue as in Lemma~2.4 of \cite{CP10}. Using Stirling formula 
$$
e^{7/8} \le \frac{m!}{(\frac{m}{e})^m \sqrt{m}} \le e,\forall m \in \mathbb N,
$$
one gets
$$
\prod_{j=1}^{m} (j-\frac{1}{2}) = \frac{(2m)!}{2^{2m} \cdot m!} \ge \left(\frac{m}{e}\right)^m
$$
Let $j_0 =\langle k \alpha_1\rangle$, if $j \ne j_0$ then,
$$
|j-k \alpha| \ge |j- k \alpha_1| = |j-j_0|- \frac{1}{2}
$$
Hence, when $j_0 <x,$ we get
$$
\prod_{j=x}^{y} |j-k\alpha| \ge \frac{1}{2}(y-x)!
$$
Similarly, if $y <j_0,$
$$
\prod_{j=x}^{y} |j-k\alpha| \ge \frac{1}{2}(y-x)!
$$
On the other hand, for $x \le j_0 \le y,$ we obtain
$$
\prod_{j=x}^{y}|j-k \alpha| \ge \Big{(} \frac{j_0-x}{e} \Big{)} ^{j_0-x} \cdot \Big{(} \frac{y - j_0}{e} \Big{)} ^{y - j_0}
\cdot |j_0 - k \alpha| \ge \Big{(}\frac{y-x}{2e} \Big{)}^{y-x} \cdot |j_0-k \alpha|.
$$
However, $|j_0-k \alpha| \ge |k \alpha_2|$, which finishes the proof. \qed
\end{proof}

It is easy to see that the space $\mathcal{P}_n$ of polynomials in $\mathbb C[z,w]$ of degree at most $n$ has dimension equal to $N+1$, where $N := (n^2+3n)/2.$ We are now ready to proceed with the proof of Theorem~\ref{thm:main}.

\begin{proof}
Our argument closely follows that of \cite{CP10}. We first obtain the upper estimate for the exponent $e_n(\alpha)$. For a given polynomial $R(\lambda) = \sum_{j=0}^{m} c_j \lambda^j$ of a single variable we let $D_R$ denote the following differential operator
$$
D_R = R \bigg( \frac{d}{dz} \bigg) = \sum_{j=0}^{m}c_j \frac{d^j}{dz^j }.
$$
Then, $\forall \alpha \in \mathbb{C},$ we have
\begin{equation} \label{diffeq}
D_R  (e^{\alpha z})|_{z=0} = \sum_{j=0}^{m}c_j \alpha^j e^{\alpha \cdot 0} = R(\alpha).
\end{equation}
Let $P(z,w)=\sum_{j+k \le n} c_{jk} z^j w^k \in \mathcal{P}_n$, $ n \ge 1$, be given with $\|P\|_K \le 1$, where as before $K = \{ ( e^z , e^{\alpha z)} : |z| \le 1 \}. $ We set
$$
f(z) := P( e^z , e^{\alpha z}) = \sum_{j+k \le n} c_{jk} e^{(j+k \alpha)z}.
$$
To obtain the upper bound for $e_n(\alpha)$ it suffices to estimate the coefficients $c_{lm}$ from above. To this end, we define the following polynomials $R_{lm}$ of degree $N$
$$
R_{lm}(\lambda) = \prod_{j+k \le n, (j,k) \ne (l,m)} (\lambda - j - k \alpha )=\sum _{t=0}^N a_t \lambda^t.
$$ 
Using (\ref{diffeq}) we have
\begin{eqnarray}
D_{R_{lm}}f(z)|_{z=0} &=& \sum_{j+k \le n}c_{jk}R_{lm}(j+k \alpha) \nonumber\\
                               &=& c_{lm}R_{lm}(l+m \alpha)  \nonumber\\
                               &=& c_{lm} \beta_{lm}  \nonumber \\
            \beta_{lm} &:=& \prod_{j+k \le n, (j,k)\ne (l,m)}(l-j+(m-k)\alpha ) \nonumber
\end{eqnarray}
Using Cauchy's estimates $|f^{(t)}(0)|\le t! \le N^t$ for $t \le N,$ we arrive at
$$
\Big{|} D_{R_{lm}}f(z) |_{z=0} \Big{|} = \Bigg{|} \sum_{t=0}^{N} a_t f^{(t)}(0)\Bigg{|} \le 
\sum_{t=0}^{N} |a_t| N^t \le (N+n)^N,
$$
where the last inequality follows from Vieta's formulas and the fact that \\
$| j+k \alpha | \le n.$
Therefore
\begin{eqnarray} \label{logest}
\log(|c_{lm}\beta_{lm}|) \le N \log (N+n) \le n^2 \log n + 3.7n^2.
\end{eqnarray} 
We now study the lower estimates on $|\beta _{lm}|$ which will lead to upper estimate for $c_{lm}$. Clearly,
$$
|\beta_{lm}| \ge \prod_{k=0,k\ne m}^{n} \prod_{j=0}^{n-k} |l-j+(m-k)\alpha| = A_1 A_2,
$$
Where
$$
A_1 = \prod_{k=0}^{m-1} \prod_{j=0}^{n-k}|j-l-(m-k)\alpha| = \prod_{k=1}^{m} \prod_{j=-l}^{n-l-m+k}|j-k \alpha|,
$$
$$
A_2 = \prod_{k=m+1}^{n} \prod_{j=0}^{n-k}|l-j-(k-m)\alpha| = \prod_{k=1}^{n-m} \prod_{j=l+m+k-n}^{l}|j-k \alpha|,
$$
From Lemma~\ref{prod} we see that
$$
A_1 \ge \prod_{k=1}^{m} \Big{(} \frac{n-m+k}{2e} \Big{)}^{n-m+k} \cdot |k \alpha_2 |,
$$$$
A_2 \ge \prod_{k=1}^{n-m} \Big{(} \frac{n-m+k}{2e} \Big{)}^{n-m+k} \cdot | k \alpha_2|.
$$
Thus, 
\begin{eqnarray}
|\beta _{lm}| \ge A_1 A_2  
		&\ge&  n! \cdot |\alpha_2|^n \cdot \prod_{k=1}^{n} \Big{(} \frac{k}{2e} \Big{)}^{k}\nonumber\\
  		 &\ge&    |\alpha_2|^n \cdot e^{-2n^2} \cdot \prod_{k=1}^n k^k \nonumber.\end{eqnarray}
Using $\prod_{k=1}^n k^k    \ge \frac{n^2 \log n}{2} - \frac{n^2}{4}$ (cf. Lemma~2.1 in \cite{CP03}) we get
\begin{eqnarray}
\log |\beta_{lm}| &\ge& \frac{n^2 \log n}{2} - \frac{n^2}{4}  + n \log|\alpha_2|-2n^2 \nonumber\\
&\ge& \frac{n^2 \log n}{2} - \frac{9n^2}{4} + n \log|\alpha_2|\nonumber
\end{eqnarray}
Thus, using (\ref{logest}) we arrive at
\begin{eqnarray}
\log |c_{lm}| &\le& n^2 \log n + 3.7 n^2 - \log |\beta_{lm}| \nonumber\\
		&\le& \frac{n^2}{2} \log n + 5.95 n^2 - n \log |\alpha_{2}|   \nonumber.
\end{eqnarray}
Clearly, $\|P\|_{\Delta^2} \le \sum_{j+k \le n} |c_{jk}| \le (N+1) \max_{j+k \le n} |c_{jk}|$. Recalling $N = (n^2+3n)/2$ and using (\ref{eq:En}) we obtain that
$$e_n(\alpha)=\log E_n(\alpha) \le \log(N+1) + \frac{n^2}{2} \log n + 5.95 n^2 - n \log |\alpha_{2}|,$$
Since $\log(N+1) \le N = (n^2 +3n)/2 \le 2n^2$ for all $n \ge 1,$ we get that 
$$e_n(\alpha) \le \frac{n^2}{2} \log n + 8 n^2 - n \log |\alpha_{2}|,$$
which gives the upper estimate.

We now turn to obtaining lower estimate for $e_n(\alpha).$ To this end, we would like to construct a polynomial whose $\Delta^2$-norm is large compared to its $K$-norm. We want to  show that we can pick coefficients of $P(z,w)=\sum_{k+j \le n} c_{jk} z^kw^j \in \mathcal P_n$ such that the Maclaurin series expansion of $f(t):=P(e^t, e^{\alpha t})=\sum_{j +k \le n} c_{jk} e^{(k+\alpha j)t}$ is $f(t)=\sum_{k=N}^\infty a_k t^k,$ that is, $f(t)$ has zero of order $N$ at 0, where as before $\dim \mathcal P_n = N+1.$ In other words, we want $f^{(k)}(0)=0$ for all $0 \le k \le N-1.$ Thus, we get a system of $N$ linear equations 
$$\sum_{j+k \le n} c_{jk} (k+\alpha j)^m=0,\,\, m=0,1,\dots,N-1.$$
List $\{(k+\alpha j) : k+j \le n\}$ by $\{a_1,a_2,\dots, a_N\}$, then the system has a solution provided that the following Vandermonde matrix 
\[ \left( \begin{array}{ccccc}
1 & 1 & 1&\dots & 1 \\
a_1 &a_2 & a_3 &\dots & a_N \\
a_1^2 &a_2^2 & a_3^2 &  \dots & a_N^2 \\
\vdots & \vdots & \vdots &\ddots & \vdots\\
a_1^{N-1} & a_2^{N-1} &a_3^{N-1} &\dots & a_N^{N-1} \end{array} \right)\] 
is invertible. Hence, it suffices to show that $a_j \ne a_k$ unless $j=k$. Indeed, $k_1+\alpha j_1=k_2+\alpha j_2$ implies $\alpha_2 j_1=\alpha_2 j_2,$ but $\alpha_2 \ne 0$ so that we must have $j_1=j_2.$ This in turn gives $k_1=k_2.$ So, the system has a solution and we can make sure that $f(t)=P(e^t, e^{\alpha t})=t^N g(t)$ for some entire holomorphic function $g(t).$ We set $h(t):= f(t)/\|P\|_K.$ Then,  $\|h\|_{\Delta } =   \|f\|_\Delta/{\|P\|_K} = 1$ as $\|P\|_K = \sup_{|z|\le 1}|P(e^z,e^{\alpha z})| = \sup_{ |z|\le 1}|f(t)|$. Fix $r \ge 1$ (to be determined later) and consider $|t| = r$, then, Maximum Modulus Principle for holomorphic functions yields
$$\sup_{|t| = r} |h(t)| =\frac{\sup_{|t|= r}|f(t)|}{\|P\|_K} \ge \frac{r^N\sup_{|t| = r} |g(t)|}{\|P\|_K} = 
\frac{r^N  \sup_{|t|= r}|g(t)|}{\|P\|_K} \ge r^N.$$
Equation (\ref{eq:B}) gives
$$|f(t)|=|P(e^t,e^{\alpha t})| \le \|P\|_K E_n(\alpha) e^{n \log^+\max\{|e^t|,|e^{\alpha t}|\}},$$
Hence,
$$r^N \le \sup_{|t| = r} |h(t)| \le E_n(\alpha) e^{n r} \textrm{ for any } r \ge 1.$$
Now taking $r=N/n$ we get
$$e_n(\alpha) \ge N\log(N/n) - N \ge \frac{n^2}2 \log n - n^2.$$
This finishes the proof. \qed
\end{proof}

\begin{acknowledgement}
The first author would like to thank Dan Coman for a useful discussion.
\end{acknowledgement}

%
%
%

\end{document}